\documentclass[a4paper,12pt]{article}
\usepackage{amsthm}
\usepackage{amssymb}
\usepackage{amsmath}
\usepackage{amsfonts}
\usepackage{url}
\usepackage{amsthm}
\newtheorem{theorem}{Theorem}
\newtheorem{lemma}{Lemma}
\newtheorem{corollary}{Corollary}

\newtheorem{definition}{Definition}[section]
\def\b1{\mbox{\boldmath $1$}}

\newcommand{\noi}{\noindent}

\newcommand{\be}{\begin{eqnarray}}
\newcommand{\ee}{\end{eqnarray}}

\numberwithin{mc}{subsection}

\numberwithin{md}{subsection} 
\numberwithin{mt}{subsection} 
\numberwithin{ml}{subsection} 
\numberwithin{mpro}{subsection} 

\title{{Kolmogorov's 0-1 law as false-truth law.}}
\author{A. Deshpande
\footnote {U.Strathclyde, Glasgow G40GE, Scotland, UK; e-mail: a.d.deshpande10@gmail.com}
}

\date{}

\begin{document}

\maketitle \baselineskip20pt
\parskip10pt
\parindent.4in

\begin{abstract} \noi The Kolmogorov's 0-1 law is so powerful and fundamental  in Probability theory that the result of the law of large numbers is its corollary. The latter is the theoretical bedrock of modern day statistical machine learning. Expected resurgence in the interest of the law's proof is challenged by its measure theoretic  nature. By observing 0-1 law as False-Truth law like in logic, we show here that the proof of the law is simplified. This simplicity comes from the fact that our  approach  hardly needs any measure theory beyond defining an obvious probability space.  A novel contribution via this article is the use of logic within the proof of a fundamental result in the theory of probability. 

\end{abstract}
\noindent\\
{\bf Keywords}: Kolmogorov's 0-1 law, independence, Bracket class.\\
{\bf AMS 2010 Mathematics Subject Classification}: 49L02, 60G02.

\footnote{``This version of the article has been accepted for publication, after peer review (when applicable) but is not the Version of Record and does not reflect post-acceptance improvements, or any corrections. The Version of Record is available online at: http://dx.doi.org/10.1007/s13171-026-00445-w  "}
\section{Problem statement}\label{s1}
We assume that there is an underlying probability space ($\Omega, {\mathcal F}, \mathbf{P}$) with respect to which all the probability objects are defined. A collection $\{A_{\alpha}\}_{\alpha \in I}$ of events is said to be independent if for each $j,n \in I\subset \mathbb{N}$ and each distinct choice of $\alpha_1,\alpha_2,...,\alpha_j \in I$ , we have
\begin{equation}
\mathbf{P}(A_{\alpha_1}\cap A_{\alpha_2}\cap...\cap A_{\alpha_j})=\mathbf{P}(A_{\alpha_1})\mathbf{P}(A_{\alpha_2})...\mathbf{P}(A_{\alpha_j}).
\label{eq:s}
\end{equation}
\begin{definition}\label{1}
Given a sequence of events $A_1,A_2,...,$ we define their tail field by
\begin{equation}
\tau= \cap_{k=n}^{\infty}\sigma(A_{k+1},A_{k+2},...).
\label{1}
\end{equation}
\end {definition}
where $\sigma()$ as usual denotes the $\sigma$-algebra.
We now are in a position to state Kolmogorov's 0-1 law.
\begin{theorem}
\label{th:1}
If events $A_1,A_2,...$ are independent, with tail-field $\tau$, and if $A \in \tau$, then $\mathbf{P}(A)=0$ or 1.
\end{theorem}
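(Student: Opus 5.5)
The plan is the standard route: show that $A$ is independent of itself. Then $\mathbf{P}(A)=\mathbf{P}(A\cap A)=\mathbf{P}(A)^2$, which forces $\mathbf{P}(A)\in\{0,1\}$. This last step is the ``false--truth'' dichotomy: the only solutions of $x=x^2$ are $0$ and $1$. Independence of events is as in (\ref{eq:s}), and the tail field is read as $\tau=\bigcap_{n}\sigma(A_{n+1},A_{n+2},\dots)$.

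\textbf{Step 1: finite blocks are independent of tails.} Fix $n$. Let $\mathcal{P}_n$ be the collection of finite intersections of the sets $A_1,\dots,A_n$ and their complements, together with $\Omega$. Let $\mathcal{Q}_n$ be the analogous collection built from $A_{n+1},A_{n+2},\dots$ (finitely many at a time). Both are $\pi$-systems, and $\sigma(\mathcal{P}_n)=\sigma(A_1,\dots,A_n)$, $\sigma(\mathcal{Q}_n)=\sigma(A_{n+1},\dots)$.
\begin{itemize}
\item From (\ref{eq:s}), independence persists when some events are replaced by complements. This follows by induction, using $\mathbf{P}(B^c\cap C)=\mathbf{P}(C)-\mathbf{P}(B\cap C)$.
\item Hence $\mathbf{P}(P\cap Q)=\mathbf{P}(P)\mathbf{P}(Q)$ for all $P\in\mathcal{P}_n$ and $Q\in\mathcal{Q}_n$.
\item Fix $P$. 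The class of $Q$ satisfying this identity is a $\lambda$-system (closed under complements and countable disjoint unions, by continuity of $\mathbf{P}$). By Dynkin's $\pi$--$\lambda$ theorem it contains $\sigma(A_{n+1},\dots)$.
\item Repeat with the roles swapped. This gives that $\sigma(A_1,\dots,A_n)$ and $\sigma(A_{n+1},\dots)$ are independent.
\end{itemize}

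\textbf{Step 2: the tail event is independent of everything.} Since $A\in\tau\subset\sigma(A_{n+1},\dots)$ for every $n$, the event $A$ is independent of every set in $\bigcup_n\sigma(A_1,\dots,A_n)$. This union is an algebra, hence a $\pi$-system. The class of $B$ with $\mathbf{P}(A\cap B)=\mathbf{P}(A)\mathbf{P}(B)$ is again a $\lambda$-system. So the $\pi$--$\lambda$ theorem gives independence of $A$ from $\sigma(A_1,A_2,\dots)$. Since $A$ lies in this $\sigma$-algebra, taking $B=A$ finishes the proof.

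\textbf{Main obstacle.} The delicate point is the two passages from generators to generated $\sigma$-algebras in Steps 1 and 2. Independence is only given for finitely many events at a time, while $A$ is defined through countably infinite operations. I would handle this with Dynkin's theorem as above. An alternative fitting the paper's ``minimal measure theory'' spirit would be to approximate $A$ by sets $B_k$ in the algebra $\bigcup_n\sigma(A_1,\dots,A_n)$ with $\mathbf{P}(A\triangle B_k)\to 0$, and pass to the limit in $\mathbf{P}(A\cap B_k)=\mathbf{P}(A)\mathbf{P}(B_k)$. The existence of these approximations is itself a standard approximation lemma that would need to be assumed or proved.
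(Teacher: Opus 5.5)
Your proof is correct, and it is essentially the Skorokhod--Lo\`eve ``$A$ is independent of itself'' argument that the paper recalls in Section~2 and then deliberately sets aside. The paper shares your outer frame: $A\in\sigma(A_{n+1},A_{n+2},\dots)$ for every $n$, so each $B_n\in\sigma(A_1,\dots,A_n)$ is independent of $A$, and at the end $\mathbf{P}(A)=\mathbf{P}(A)^2$. Beyond that it differs in two places. First, it simply asserts the independence of $\sigma(A_1,\dots,A_n)$ and $\sigma(A_{n+1},\dots)$, which your Step~1 proves. Second, in place of your Step~2 it treats independence as a relation $R$ and declares $R$ symmetric and transitive under a standing assumption that all events in play are mutually independent. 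It then invokes ``bracket classes are equal or disjoint'' to get $[A]=[B_n]$, hence $A\in[A]$, i.e.\ $ARA$. That route avoids Dynkin's theorem only because transitivity does all the work, and transitivity is exactly what cannot be had for free. The step actually used is $ARB_n$, $B_nRA\Rightarrow ARA$, which is the case $A=C$ that the paper's own Remark concedes fails in general. Moreover, since $AR\emptyset$ holds for every event, a symmetric transitive $R$ would force $\mathbf{P}(A)\in\{0,1\}$ for \emph{every} event, tail or not. In other words, the standing assumption presupposes the conclusion. Your $\pi$--$\lambda$ extension from the algebra $\bigcup_n\sigma(A_1,\dots,A_n)$ to $\sigma(A_1,A_2,\dots)$ is precisely the step that makes self-independence of $A$ a theorem rather than a hypothesis. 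Its price is the measure-theoretic input (Dynkin's theorem, or the approximation lemma you mention) that the paper set out to avoid.
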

Proof of Kolmogorov's 0-1 law is purely measure theoretic \cite{Kolmogorov}. It involves a ``bootstrapping procedure" in extending the equivalence of the probability measures from $\cup_{k=1}^{\infty}{\sigma(A_1,...,A_k})$ to $\sigma(A_1,A_2,...)$ . In the next section we describe two other approaches viz. algebraic and secondly via use of martingale theory.
\section{Two approaches}\label{s2}
One finds in Skorokhod \cite{Skorokhod}, an algebraic proof of the law which is pedagogically easier. As seen in \cite{Loeve}, in Skorokhod's approach, one is to observe that for the deduction of the law (viz. $ \mathbf{P}(A)=\{0,1\})$, the event $A$ has to be independent of itself. Since $\sigma(A_1,...,A_n)$ is independent of $\sigma(A_{n+1},A_{n+2},...)$, $\tau$ should be independent of the sigma algebra generated by $A_1,A_2,...$ and since $\tau$ being contained in $\sigma(A_1,A_2,...)$; it is to be independent of itself leading to the conclusion. In \cite{Port}, another approach based on the martingale theory is described. The argument goes as follows. First note that $\mathbf{P}(A|A_1,...,A_n)=\mathbf{P}(A)$. By a certain form of the martingale convergence theorem, we have, as $\lim_{n\rightarrow \infty}, \mathbf{P}(A|A_1,...,A_n)=\mathbf{1}_{A}$ a.e. Hence $\mathbf{P}(A)=\{0,1\}$.\\
\indent Expectedly, the above approaches also command heavy use of machinery from measure theory. This motivates us to prove the law using measure theory limited to only defining an obvious probability space. We pursue this challenge as follows. When the 0-1 law is viewed as False-Truth law,one can study independence between two events as a {\it independence relationship} between them. A collection of events validated by an {\it independence relation} forms what we call a {\it bracket class}, to which we apply a relaxed version of a result on set relations in \cite{Book}, specifically Theorem 2(b) Chapter 4 , thereby leading to the proof of the 0-1 law.  We elaborate on this program in the coming sections. However before we do so, we request the reader to remember the following standing assumption valid for the whole of the article.\\
{\bf Standing Assumption}. The statement of the Kolmogorov's 0-1 law cf. Theorem \ref{th:1} clarifies that the  law is valid only on independent events. Hence all events described in the article henceforth are assumed independent of each other. Expectedly the results employing these are valid only for independent events obviously enough to prove the 0-1 law. 
\section{Analysis}\label{s4}
We begin by defining the term ``Independence relation" $R$.
\begin{definition}\label{2.02}
Any subset $R$ of the cartesian product  consisting of elements of ${\mathcal F} \times {\mathcal F}$ is called an independence relation $R$, when defined as $R=\{(A,B): A,B \in {\mathcal F}, \mathbf{P}(A \cap B)=\mathbf{P}(A)\mathbf{P}(B)\}$.
\end {definition}
We henceforth describe $ARB$ as an independent relation between sets $A,B$. We  now formalize notion of $R$ being {\it symmetric} and {\it transitive}.\\
\begin{lemma}\label{2.2}
Let the standing assumption hold true. Then the independence relation $R$  is symmetric.
\end{lemma}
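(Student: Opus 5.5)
The plan is to check symmetry directly from Definition \ref{2.02}. Symmetry of $R$ means: whenever $ARB$ holds, $BRA$ holds as well. The argument uses only two facts about sets and real numbers, so no measure theory beyond the probability space $(\Omega,{\mathcal F},\mathbf{P})$ is needed.

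First, fix $A,B\in{\mathcal F}$ with $ARB$. By Definition \ref{2.02}, membership $(A,B)\in R$ means exactly that
\begin{equation*}
\mathbf{P}(A\cap B)=\mathbf{P}(A)\mathbf{P}(B).
\end{equation*}
Second, use commutativity of intersection, $A\cap B=B\cap A$, so the two events on the left coincide and $\mathbf{P}(B\cap A)=\mathbf{P}(A\cap B)$. Third, use commutativity of multiplication in $[0,1]\subset\mathbb{R}$, so $\mathbf{P}(A)\mathbf{P}(B)=\mathbf{P}(B)\mathbf{P}(A)$. Chaining these three equalities gives
\begin{equation*}
\mathbf{P}(B\cap A)=\mathbf{P}(B)\mathbf{P}(A),
\end{equation*}
which is precisely the condition for $(B,A)\in R$, i.e.\ $BRA$.

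The standing assumption (all events independent) guarantees that such pairs do populate $R$: for distinct events in the collection, equation (\ref{eq:s}) with $j=2$ says $(A_{\alpha_1},A_{\alpha_2})\in R$. The symmetry argument itself, however, does not depend on this; it holds for every pair in $R$.

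There is essentially no obstacle here. The only point needing care is to state clearly that $R$ is defined by an equation that is invariant under swapping $A$ and $B$. That invariance is exactly what makes the relation symmetric.
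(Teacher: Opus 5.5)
Your proof is correct and follows the same route as the paper: both chain $\mathbf{P}(A\cap B)=\mathbf{P}(A)\mathbf{P}(B)=\mathbf{P}(B)\mathbf{P}(A)=\mathbf{P}(B\cap A)$ using commutativity of intersection and of real multiplication. You are also right that the standing assumption plays no role here; the paper invokes it, but symmetry holds unconditionally for every pair in $R$.
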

\begin{proof} Since the standing assumption holds we have events $A,B \in {\mathcal F}$ to be independent. By definition of  independence , we have, $\mathbf{P}(A \cap B)=\mathbf{P}(A)\mathbf{P}(B)=\mathbf{P}(B)\mathbf{P}(A)=\mathbf{P}(B\cap A)$. Thus $ARB \implies BRA$, and the conclusion follows.
\end{proof}
We now want to show that $R$ defined here is transitive i.e. on independent sets $A,B,C \in {\cal F}$; $ARB$ and $BRC$ implies $ARC$ i.e. $R$ is transitive. 
\begin{lemma}\label{2.3}
Let the standing assumption hold true. Then for events $A,B,C \in {\cal F}$ ,$R$ will contain $\{(A,B),(B,C),(A,C)\}$. Thus  $ARB,BRC$ implies $ARC$, i.e. independence relation $R$  is transitive.
\end{lemma}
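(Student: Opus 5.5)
The plan is to read the statement as a direct consequence of the Standing Assumption, and to argue from the definition of independence in \eqref{eq:s} rather than from any structural property of $R$. By the Standing Assumption, the events $A,B,C\in{\mathcal F}$ under consideration belong to an independent collection $\{A_\alpha\}_{\alpha\in I}$. I would therefore first apply \eqref{eq:s} with $j=2$ to each of the three distinct pairs drawn from $\{A,B,C\}$. This gives
\begin{equation*}
\mathbf{P}(A\cap B)=\mathbf{P}(A)\mathbf{P}(B),\quad \mathbf{P}(B\cap C)=\mathbf{P}(B)\mathbf{P}(C),\quad \mathbf{P}(A\cap C)=\mathbf{P}(A)\mathbf{P}(C).
\end{equation*}
By Definition \ref{2.02}, these three identities say exactly that $(A,B),(B,C),(A,C)\in R$, which is the first assertion of the lemma.

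For the implication $ARB,\ BRC\implies ARC$, I would note that under the Standing Assumption the conclusion $ARC$ already holds outright. So the implication is true trivially, because its consequent is true regardless of the hypotheses. Lemma \ref{2.2} could be invoked to record that the pairs may be taken in either order, giving $BRA$, $CRB$ and $CRA$ as well. This makes the relation symmetric and transitive on the class of events covered by the assumption.

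The main obstacle is conceptual rather than computational. Independence is not transitive in general: one can take $C=A$ with $\mathbf{P}(A)\in(0,1)$ and $B=\Omega$, so that $ARB$ and $BRA$ hold but $ARA$ fails. Hence the proof must not try to derive $ARC$ from $ARB$ and $BRC$ alone. It must rest entirely on the pairwise clause ($j=2$) of the mutual independence built into the Standing Assumption, and it must require that $A$, $B$, $C$ be \emph{distinct} members of the independent family, since \eqref{eq:s} quantifies only over distinct indices. I would state this restriction explicitly in the proof so that the transitivity claim is read as holding only on such a family, and not for the relation $R$ on all of ${\mathcal F}\times{\mathcal F}$.
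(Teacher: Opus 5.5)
Your proposal is correct and takes essentially the same approach as the paper. The paper also invokes the Standing Assumption, reads the pairwise product identities off \eqref{eq:s} (it additionally lists the triple product $\mathbf{P}(A\cap B\cap C)=\mathbf{P}(A)\mathbf{P}(B)\mathbf{P}(C)$, which is not needed), and uses Definition~\ref{2.02} to conclude $(A,B),(B,C),(A,C)\in R$, so $ARC$ holds regardless of the hypotheses; your explicit restriction to distinct members of the independent family, and your remark that this is not transitivity of $R$ on all of ${\mathcal F}\times{\mathcal F}$, are careful precisions the paper leaves implicit.
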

\begin{proof} Since the standing assumption holds true we have $A,B,C$ as  independent events. Thus by equation \ref{eq:s}  we have
\begin{eqnarray*}
\mathbf{P}(A \cap B \cap C)&=&\mathbf{P}(A)\mathbf{P}(B)\mathbf{P}(C)\\
\mathbf{P}(A \cap B)&=&\mathbf{P}(A)\mathbf{P}(B)\\
\mathbf{P}(B\cap C)&=&\mathbf{P}(B)\mathbf{P}(C)\\
\mathbf{P}(A \cap C)&=&\mathbf{P}(A)\mathbf{P}(C)
\end{eqnarray*}
From Definition \ref{2.02},  $R$  hence contains $\{(A,B),(B,C),(A,C)\}$. This is transitivity. 
\end{proof}
{\bf Remark}\\
 We note here that $R$ is not transitive if the assumption of independence between all events $A,B$ and $C$ is dropped. A popular example to cite is that if $A=C$ and $ARB$ and $BRC$ holds, then $ARC$ does not hold in general. This example illustrates the need  of the standing assumption.

We formally define the concept of a Bracket class.
\begin{definition}\label{equ}
Let $\hat{R}$ be the  relation amongst events which are subsets of ${\mathcal F} $. For $A \in {\mathcal F}$, we define the Bracket class $[A]$ by

\begin{equation}\label{eq}
[A]=\{x:x \in {\mathcal F}, x\hat{R}A\}
\end{equation}
\end{definition}
We discuss some useful results related to the bracket class before we head to proving the law.

\begin{lemma}
Let ${\hat{R}}$ be an symmetric and transitive relation  consisting of subsets of ${\cal F} \times {\cal F}$ and let $a$ and $b$ be any two subsets of ${\cal F}$. Let the bracket classes $[a]$ and $[b]$ exist. If $a \in [b]$ then $[a]=[b]$.
\end{lemma}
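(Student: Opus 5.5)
The plan is the standard equivalence-class argument, using only symmetry and transitivity of $\hat{R}$ and the defining equation (\ref{eq}) of the bracket class, $[a]=\{x: x\in{\mathcal F},\, x\hat{R}a\}$. We prove $[a]=[b]$ by double inclusion. The hypothesis $a\in[b]$ unpacks, by (\ref{eq}), to $a\hat{R}b$. By symmetry of $\hat{R}$ (as in Lemma \ref{2.2} for $R$) this also gives $b\hat{R}a$. These two facts are the only links between $a$ and $b$ that we need.

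For $[a]\subseteq[b]$, take $x\in[a]$, so $x\hat{R}a$. Together with $a\hat{R}b$, transitivity (in the form of Lemma \ref{2.3}) gives $x\hat{R}b$, hence $x\in[b]$.

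For $[b]\subseteq[a]$, take $x\in[b]$, so $x\hat{R}b$. Combining this with $b\hat{R}a$ from symmetry, transitivity yields $x\hat{R}a$, i.e.\ $x\in[a]$. The two inclusions give $[a]=[b]$.

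The only delicate point, and what I expect to be the main obstacle, is justifying that transitivity is available for these particular triples $(x,a,b)$. In the paper, transitivity of $R$ (Lemma \ref{2.3}) is established only under the standing assumption that all events involved are mutually independent. The lemma, however, assumes outright that $\hat{R}$ is symmetric and transitive, so I will invoke that hypothesis directly rather than rederive it from Lemma \ref{2.3}. I will add a remark that when $\hat{R}=R$, this use is legitimate precisely because the standing assumption covers $x,a,b$. This ties the abstract lemma back to the paper's setting without extra measure theory.
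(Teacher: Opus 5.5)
Your double-inclusion argument is correct and is exactly the paper's proof: $a\in[b]$ gives $a\hat{R}b$, symmetry gives $b\hat{R}a$, and transitivity then yields $[a]\subseteq[b]$ and $[b]\subseteq[a]$. Invoking the assumed transitivity of $\hat{R}$ directly is right, but drop your closing remark that the step is legitimate for $\hat{R}=R$. Your second inclusion admits the case $x=a$, which gives $aRa$, i.e.\ $\mathbf{P}(a)=\mathbf{P}(a)^2$; moreover, since $\emptyset$ is independent of every event, transitivity of $R$ would force every event to have probability $0$ or $1$, so that remark assumes the conclusion rather than justifying the step.
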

\begin{proof} 
Since $a \in [b]$ we have both viz. $a\hat{R}b$ by definition of $[b]$ and secondly $b\hat{R}a$ (by virtue of $\hat{R}$ being symmetric) \\
(i) We show that $[a]\subseteq[b]$\\
Let $x \in [a]$ therefore $x\hat{R}a$ (by definition of $[a]$) but $a\hat{R}b$ (since $a \in [b]$). Therefore $x\hat{R}b$($\hat{R}$ by assumption in the statement is transitive). Therefore $x \in [b]$. Hence $[a]\subseteq[b]$. (ii) Similarly one can show from $b\hat{R}a$ that 
$[b] \subseteq[a]$. (i) and (ii) implies $[a]=[b]$.
\end{proof}
We require the following standard result from a first course in discrete mathematics. Since the statement is tuned for the current problem, we detail its proof.
\begin{theorem}\label{th:mt}
Let $\hat{R}$ be a symmetric and transitive relation consisting of subsets of ${\cal F} \times {\cal F}$ and the bracket class exists on a subset of ${\cal F}$. Let $a$ and $b$ be these  two subsets of ${\cal F}$. Then $[a]=[b]$ or $[a] \cap [b]= \emptyset$.
\end{theorem}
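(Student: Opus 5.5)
The plan is to argue by cases on whether $[a]\cap[b]$ is empty, and to reduce the non-empty case to the preceding lemma (if $a\in[b]$ then $[a]=[b]$). Only symmetry and transitivity of $\hat{R}$ and the definition of the bracket class in equation (\ref{eq}) are needed. No probabilistic input enters at this stage.

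If $[a]\cap[b]=\emptyset$ there is nothing to prove. Otherwise, pick an element $c\in[a]\cap[b]$. By the definition of the bracket class, $c\hat{R}a$ and $c\hat{R}b$.

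Symmetry of $\hat{R}$ turns $c\hat{R}a$ into $a\hat{R}c$. Transitivity applied to $a\hat{R}c$ and $c\hat{R}b$ then gives $a\hat{R}b$, which by the definition of $[b]$ says exactly that $a\in[b]$. The preceding lemma now yields $[a]=[b]$. Alternatively, one may apply the lemma twice: $c\in[a]$ gives $[c]=[a]$, and $c\in[b]$ gives $[c]=[b]$, hence $[a]=[b]$.

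The only point needing care is the hypothesis of the previous lemma that the bracket classes in question exist. In the second route, $[c]$ must exist as well. This is why I prefer the first route, which uses only $[a]$ and $[b]$, both assumed to exist in the statement. I will also note explicitly that, by the standing assumption together with Lemmas \ref{2.2} and \ref{2.3}, symmetry and transitivity are available for the elements involved. With that noted, the argument is a short two-case dichotomy.
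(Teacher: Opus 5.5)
Your proof is correct and is essentially the paper's. The paper takes $y\in[a]\cap[b]$ and applies the preceding lemma twice to get $[y]=[a]$ and $[y]=[b]$, which is exactly your alternative route; your preferred route instead first uses symmetry and transitivity to obtain $a\hat{R}b$, i.e.\ $a\in[b]$, and applies the lemma once, which has the small advantage of never needing $[y]$ to be one of the classes assumed to exist.

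One caveat: symmetry and transitivity of $\hat{R}$ are hypotheses of this abstract statement, so there is no need to appeal to the standing assumption or to Lemmas \ref{2.2} and \ref{2.3}, and it is cleaner not to. Those lemmas concern the specific independence relation $R$, which, as the paper's own Remark concedes, is not transitive in general, whereas the dichotomy follows from the stated hypotheses alone.
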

\begin{proof}
We need only show that if $[a] \cap [b] \neq \emptyset$ then $[a]=[b]$. To do this $y \in [a] \cap [b] $. Then $y \in [a] \implies [y]=[a]$ and $y \in [b] \implies [y]=[b]$ using above lemma. This concludes that $[a]=[b]$.
\end{proof}
\indent We now prove the Kolmogrov's law in Theorem \ref{th:1} using the concept of ``independence relation".
\begin{proof}
 Note  that $A \in \tau \subset \sigma(A_{n+1},...) \subset \sigma(A_1,A_2,...) $, we have $A \in \sigma(A_{n+1},A_{n+2},...) \forall$  non-negative integers $n$. Since $\sigma(A_1,...A_n)$ is independent of $\sigma(A_{n+1},...)$ this implies that $B_n \in \sigma(A_1,...A_n)$ is independent of $A$. Or in other words $B_nRA$ where $R$ is our usual independence relation.  Via the assumption of probability triplet, the existence of empty set is guaranteed. Hence from Definition \ref{equ}, the bracket class exists.  From Lemma \ref{2.2} $R$ has symmetry , Lemma \ref{2.3}, $R$ has transitivity. Hence from Theorem \ref{th:mt} we have $[A]=[B_n]$ for any  $n \geq 1$.  Therefore $[A]=\{\emptyset, \Omega,A,A^c\}$ must be true, where $A^c$ is complement of $A$. As  $A \in [A]$ we get $P(A \cap A)=P(A)=P(A)*P(A)$ implying $P(A)=\{0,1\}$.
\end{proof}

\end{document}